\documentclass[11pt,reqno]{amsart}
\usepackage[T1]{fontenc}
\usepackage{lmodern}
\usepackage[expansion=false]{microtype}
\usepackage{mathtools}
\usepackage{amssymb}
\usepackage{booktabs}
\usepackage{url}
\usepackage[hidelinks]{hyperref}
\numberwithin{equation}{section}
\newtheorem{theorem}{Theorem}[section]
\newtheorem{proposition}[theorem]{Proposition}
\newtheorem{lemma}[theorem]{Lemma}
\theoremstyle{definition}
\newtheorem{definition}[theorem]{Definition}
\newcommand{\doi}[1]{\url{https://doi.org/#1}}
\newcommand{\lean}[1]{\texttt{#1}}
\title{The Borowiecki--Broere generalized total coloring
conjecture for planar graphs}
\author{James Alexander Schreib}\thanks{ORCID iD: \url{https://orcid.org/0009-0005-1048-0209}}
\address{New York University}
\email{jas10320@nyu.edu}
\date{September 5, 2026}
\subjclass[2020]{05C15, 05C10, 05B35}
\keywords{planar graph, total coloring, forest partition,
matroid partition, Four Color Theorem}
\begin{document}
\begin{abstract}
We prove the conjecture of Borowiecki and Broere that every finite
simple planar graph admits a generalized total coloring with four
colors: a proper vertex coloring with four colors together with an
edge coloring with four colors in which each edge color class is a
forest and no edge receives the color of either endpoint. In fact
every proper four-coloring of such a graph extends to an edge
coloring of the required kind. The proof combines a component-count
inequality for planar graphs whose vertices are partitioned into four
independent sets, obtained by completing to a triangulation, with the
matroid partition theorem applied to four graphic matroids.
A Lean~4 formalization relative to six named background assumptions
is described below.
\end{abstract}
\maketitle
\markboth{J. A. Schreib}{Borowiecki--Broere conjecture}
\section{Introduction}
In this paper, a \emph{generalized total coloring} with four colors is a
vertex coloring $\phi \colon V(G) \to \{1,2,3,4\}$ together with an
edge coloring $\psi \colon E(G) \to \{1,2,3,4\}$ such that $\phi$ is
proper, the subgraph with edge set $\psi^{-1}(i)$ is a forest, and
$\psi(e) \notin \{\phi(u),\phi(v)\}$ for every edge $e = uv$.
Borowiecki and Broere conjectured that every finite simple planar
graph admits such a coloring~\cite[Conjecture~3, p.~255]{BB16}.
This is the $(\mathcal O,\mathcal D_1)$ case of the generalized total
colorings introduced in~\cite{BKMM11}, where $\mathcal O$ is the class
of edgeless graphs and $\mathcal D_1$ is the class of forests.
The weaker conjecture allowing vertex color classes to induce forests
is also stated in~\cite[Conjecture~4]{BB16}; recent work of Cara and
Dorfling~\cite{CD26} establishes that version for two infinite families
of planar graphs.

The main result here is the following extension form, which implies
the conjecture directly.
\begin{theorem}\label{thm:main}
Let $G$ be a finite simple planar graph. Every proper vertex
coloring $\phi \colon V(G) \to \{1,2,3,4\}$ extends to an edge
coloring $\psi \colon E(G) \to \{1,2,3,4\}$ such that each
$\psi^{-1}(i)$ is a forest and no edge receives the color of either
endpoint.
\end{theorem}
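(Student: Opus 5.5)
Fix $G$ and a proper coloring $\phi$ with color classes $V_1,\dots,V_4$. For each $i$ let $G_i = G - V_i$ be the subgraph induced by the vertices not colored $i$; an edge $e=uv$ may receive color $i$ exactly when $e \in E(G_i)$. Let $M_i$ be the graphic matroid of $G_i$, regarded as a matroid on $E(G)$ in which every edge outside $E(G_i)$ is a loop. An admissible $\psi$ is then the same thing as a partition of $E(G)$ into sets $I_1,\dots,I_4$ with $I_i$ independent in $M_i$. By the matroid partition theorem (Edmonds), such a partition exists if and only if
\[
|F| \le \sum_{i=1}^{4} r_i(F) \qquad\text{for every } F \subseteq E(G),
\]
where $r_i(F) = |V(F)\setminus V_i| - c_i(F)$ and $c_i(F)$ is the number of components of the graph on $V(F)\setminus V_i$ with edge set $F \cap E(G_i)$.

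The plan is therefore to verify this inequality. We may reduce to $F$ being the full edge set of a planar graph $H$, namely the subgraph spanned by $F$. Each edge $uv$ with $\phi(u)=a$ and $\phi(v)=b$ lies in exactly the two graphs $G_i$ with $i\notin\{a,b\}$. Writing $n_j = |V(H)\cap V_j|$ and $n = \sum_j n_j$, we have $\sum_i |V(H)\setminus V_i| = 3n$, so the inequality becomes
\[
|E(H)| + \sum_{i=1}^4 c_i(H) \le 3n .
\]
Here $c_i(H)$ counts the components of $H - V_i$, with isolated vertices included. This is the component-count inequality mentioned in the abstract. Sanity check: if $H$ is a triangulation with $n\ge 4$, then $|E| = 3n-6$, and the inequality requires $\sum_i c_i \le 6$.

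To prove the inequality I would complete $H$ to a triangulation while keeping the four-partition proper. Adding an edge between different classes raises $|E|$ by one and lowers $\sum_i c_i$ by at most two, since it lies in two of the $G_i$. So monotonicity is not automatic, and the completion has to be chosen carefully. The cleaner route is to prove directly that, for a plane graph $H$ with a proper 4-coloring,
\[
\sum_i c_i(H) \le 3n - |E(H)|.
\]
I would argue by induction on the number of faces, or by Euler's formula applied to each $H - V_i$. For a connected plane $H$, a component of $H-V_i$ is separated from the others by vertices of color $i$, and each separation can be charged to face-deficiency: faces of length greater than $3$ contribute $3n-6-|E| = \sum_f(\deg f - 3)/\ldots$ worth of slack. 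The obstacle I expect to be hardest is exactly this charging, namely showing that $\sum_i (c_i - 1)$ is at most $\sum_f(\deg f-3) + 2$ or similar. I would try to triangulate each non-triangular face by adding chords that join differently colored vertices, which is possible since a face of length at least $4$ has a chord between distinct colors. I would then track that each added chord merges at most the number of components it costs in edges, and finally prove $\sum_i c_i \le 6$ for properly 4-colored triangulations. For that last step, $H-V_i$ is a plane graph whose components correspond to faces of the triangulation restricted around $V_i$. A link-and-Euler count on the bipartite structure formed by $V_i$ and the components of $H - V_i$, which is planar, gives $\sum_i c_i(H) \le 6$. Disconnected $H$ is handled by summing the inequality over its components.

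Once the inequality holds for all $F$, the partition theorem yields $I_1,\dots,I_4$, and setting $\psi(e)=i$ for $e\in I_i$ gives the required edge coloring. Each $I_i$ contains no loops of $M_i$, so $\psi(e)\ne\phi(u),\phi(v)$ for every edge $e=uv$, and each $I_i$ is a forest because it is independent in a graphic matroid.
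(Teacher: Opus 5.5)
\textbf{There is a genuine gap.} Your matroid reduction is exactly the paper's: the matroids $M_i$, the rank formula, the reformulation as $\sum_i\kappa(H-V_i)\le 3|V(H)|-|E(H)|$ (Proposition~\ref{prop:rank}), and the passage from the partition to $\psi$ all match. The problem is that this component inequality, which is the real content, is never proved, and the completion plan you sketch fails at a concrete step.

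It is not true that every face of length at least $4$ has a chord between differently colored vertices. An alternating quadrilateral $a_1b_1a_2b_2$ colored $\alpha\beta\alpha\beta$ has only monochromatic diagonals. Already $H=C_4$ colored $1,2,1,2$ admits no properly colored completion to a triangulation, since each of its two faces can only receive a monochromatic diagonal. So your final step about properly $4$-colored triangulations is never reached.

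The paper inserts bichromatic diagonals only until every face is a triangle or an alternating quadrilateral. This is where the process stops, because on a facial cycle $v_1\cdots v_k$ with $k\ge 5$, $v_1v_3$ and $v_1v_4$ cannot both be monochromatic. It then adds one monochromatic diagonal per quadrilateral, chosen so that all of them together form a forest. Such a choice exists by Lemma~\ref{lem:alternating}: if both diagonals of $a_1b_1a_2b_2$ closed cycles, there would be disjoint $a_1$--$a_2$ and $b_1$--$b_2$ paths in the complementary disk. In the resulting triangular multigraph $T$ (parallel edges can occur), each $T[V_i]$ is a forest. Lemma~\ref{lem:del}, which extends link-rerouting from independent sets to sets inducing forests, then gives $\kappa(T-V_i)=1$ for all $i$.

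Second, your claim that \emph{each added chord merges at most the number of components it costs in edges} is asserted with no mechanism. As you observed yourself, it fails for an arbitrary chord, which lies in two (or, if monochromatic, three) of the graphs $T-V_i$. What makes it true is an ordering. In each original face, root the dual tree of its triangulation and remove the diagonals in order of increasing depth. When $xy$ is removed, its child-side triangle $xyz$ still has $xz$ and $zy$. So for every color $i$ other than those of $x$, $y$, $z$, the path $xzy$ keeps $x$ and $y$ connected after deleting $V_i$, and only the graph with $V_{\phi(z)}$ deleted can gain a component. With $d=3n-6-m$ diagonals this gives $\sum_i\kappa(H-V_i)\le 4+d=3n-m-2$.

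Finally, the face-completion argument needs facial boundaries to be cycles, so you also need the reduction to $2$-connected blocks. Gluing two graphs at a vertex lowers both sides of the inequality by exactly $3$ relative to the sum for the two pieces. Summing over components covers only disjoint unions.
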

The vertex coloring is supplied by the Four Color
Theorem~\cite{AH77,AHK77,RSST97}; Gonthier describes its formal proof
in~\cite{Gon08}. Given $\phi$, the edge coloring is produced
by the matroid partition theorem of Edmonds~\cite{Edm65} and
Edmonds--Fulkerson~\cite[Theorem~1c]{EF65} applied to four graphic matroids;
see also Oxley~\cite{Oxl11}. The new content is the rank inequality that makes the
criterion applicable. It is proved by completing the graph to a
triangulation while controlling monochromatic edges, then deleting
the added edges one at a time. Background on planar graphs, blocks,
and Euler\textquoteright s formula follows Diestel~\cite{Die17}.
Section~2 fixes notation. Section~3 proves the component estimate.
Section~4 derives Theorem~\ref{thm:main} by matroid partition.
Section~5 describes the conditional Lean~4 formalization, and
Section~6 records the computational resources used.
\section{Preliminaries}
All graphs are finite and, unless stated otherwise, simple. A planar
graph is considered with a fixed embedding in the sphere; its faces
are the connected components of the complement. A \emph{block} is a
maximal $2$-connected subgraph, a bridge with its ends, or an
isolated vertex. Every connected graph is the $1$-vertex join of its
blocks along cut vertices, and Euler\textquoteright s formula
$n - m + f = 2$ holds for each connected plane graph~\cite{Die17}.
A triangulation of the sphere with
$n \ge 3$ satisfies $3f = 2m$ and hence $m = 3n - 6$; the same count
holds for a connected triangular plane multigraph. Here triangular
means that every face, including the outer face, is bounded by a
$3$-cycle.
For a graph $J$ write $\kappa(J)$ for its number of connected
components, isolated vertices included, with $\kappa(\emptyset) = 0$.
A \emph{forest} is an acyclic graph; a pair of parallel edges already
forms a $2$-cycle and so never lies in a forest. Deleting a color
class $V_i$ from $H$ means deleting those vertices and all incident
edges.
\begin{definition}\label{def:gtc}
A \emph{generalized total coloring} of $G$ with four colors consists
of a proper $\phi \colon V(G) \to \{1,2,3,4\}$ and
$\psi \colon E(G) \to \{1,2,3,4\}$ such that each $\psi^{-1}(i)$
is the edge set of a forest and $\psi(uv) \notin \{\phi(u),\phi(v)\}$ for all
$uv \in E(G)$.
\end{definition}
We use graphic matroids throughout. If $M$ is the graphic matroid of
a graph with vertex set $V$, the rank of $A \subseteq E$ is
$|V| - \kappa(V,A)$, loops contributing nothing. Declaring prescribed
elements to be loops means taking the restriction to the remaining
elements and adjoining the prescribed elements as loops. For matroids
$M_1,\dots,M_4$ on a common ground set $E$, the partition
theorem states that $E$ splits as
$F_1 \mathbin{\dot\cup} \cdots \mathbin{\dot\cup} F_4$ with $F_i$
independent in $M_i$ if and only if
$|A| \le \sum_i r_i(A)$ for every $A \subseteq E$
\cite[Theorem~1c, p.~150]{EF65}.
\section{The component estimate}
\begin{lemma}\label{lem:alternating}
Let $D$ be a closed disk in the sphere, and let $a_1,b_1,a_2,b_2$
be distinct points in this cyclic order on $\partial D$.
Two simple arcs in $D$, one joining $a_1$ to $a_2$ and the other
joining $b_1$ to $b_2$, each meeting $\partial D$ only at its ends,
cannot be disjoint.
\end{lemma}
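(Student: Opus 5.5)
We reduce the statement to the Jordan curve theorem on the sphere. Throughout, $D$ is a closed disk, so $\partial D$ is a simple closed curve. Let $\alpha$ be the arc from $a_1$ to $a_2$ and $\beta$ the arc from $b_1$ to $b_2$, and suppose for contradiction that $\alpha \cap \beta = \emptyset$.

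The points $a_1,a_2$ split $\partial D$ into two open boundary arcs $\gamma_1$ and $\gamma_2$. Because the four points alternate in cyclic order, $b_1\in\gamma_1$ and $b_2\in\gamma_2$. The first step is to show that $\alpha$ separates $D$. Set $C_1=\alpha\cup\gamma_1\cup\{a_1,a_2\}$ and $C_2=\alpha\cup\gamma_2\cup\{a_1,a_2\}$. These are simple closed curves, since $\alpha$ meets $\partial D$ only at its ends. The standard $\theta$-graph consequence of the Jordan curve theorem applies to the three arcs $\alpha,\gamma_1,\gamma_2$, which have common endpoints and are otherwise disjoint. It says that the sphere minus $\alpha\cup\partial D$ has exactly three components, with boundaries $\partial D$, $C_1$ and $C_2$. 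Hence $\operatorname{int}D\setminus\alpha$ has exactly two components $U_1$ and $U_2$, where $U_j$ is the component whose boundary is $C_j$.

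The second step is to locate the arc $\beta$. Let $\beta^\circ=\beta\setminus\{b_1,b_2\}$. This set is connected, lies in $\operatorname{int}D$, and misses $\alpha$, so it lies in a single component, say $U_1$. Now $b_2\in\gamma_2$ is a limit point of $\beta^\circ$, so $b_2\in\overline{U_1}$. But $\overline{U_1}=U_1\cup C_1$, and $b_2\notin C_1$, because $C_1\cap\partial D=\gamma_1\cup\{a_1,a_2\}$. This is a contradiction. The case $\beta^\circ\subseteq U_2$ is symmetric, using $b_1$ in place of $b_2$.

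The only nontrivial input, and the step I expect to be the main obstacle, is the $\theta$-graph statement used to identify the closures of $U_1$ and $U_2$. I would quote it from standard sources: it follows from the Jordan curve theorem applied to $C_1$ and $C_2$, together with the fact that each point of $\alpha^\circ$ is accessible from both sides, as in Diestel's treatment of plane graphs~\cite{Die17}. An alternative is to apply the Jordan--Schoenflies theorem so that $D$ becomes the unit disk. One then replaces $\alpha$ and $\beta$ by polygonal approximations that remain disjoint, and counts crossings of the polygonal $\beta$ with the chord-like polygonal $\alpha$ modulo $2$; the alternation of the endpoints forces this count to be odd. For the paper I would take the first route and cite the $\theta$-graph lemma, since that matches the level of topological background already assumed.
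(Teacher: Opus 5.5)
Your argument is correct, but it is organized differently from the paper's.

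The paper closes $\alpha$ into a Jordan curve $\alpha\cup R$, using an auxiliary arc $R$ through the open disk complementary to $D$. It then asserts that $\gamma_1$ and $\gamma_2$ lie on opposite sides of $\alpha\cup R$, because the curve ``passes from one side of $\partial D$ to the other'' at $a_1$ and $a_2$. Finally it applies the Jordan curve theorem once to $\beta$, which cannot meet $R$.

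You use no auxiliary arc. The $\theta$-graph lemma for $\alpha$, $\bar\gamma_1$, $\bar\gamma_2$ (with $\bar\gamma_j=\gamma_j\cup\{a_1,a_2\}$) identifies the two regions of $\operatorname{int}D\setminus\alpha$ together with their frontiers $C_1,C_2$. Connectedness of $\beta^\circ$ and the closure computation then finish the proof.

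The paper's version cites only the two-sided Jordan curve theorem, but it pays for this in two ways. It needs the existence of $R$, which requires Schoenflies or accessibility of boundary points from outside $D$. It also needs the opposite-sides claim, which carries the real topological content and is justified only by the local picture. For arbitrary topological arcs, making that claim rigorous needs a frontier statement of exactly the kind you invoke.

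Your route puts all the topology into one standard lemma and is rigorous from there. In fact the second half of Diestel's $\theta$-graph lemma is the present statement. That half says an arc joining a point of $\mathring P_1$ to a point of $\mathring P_3$, whose interior lies in the region of $\mathbb{R}^2\setminus(P_1\cup P_3)$ containing $\mathring P_2$, must meet $\mathring P_2$. Take $P_1=\bar\gamma_1$, $P_2=\alpha$, $P_3=\bar\gamma_2$, $P=\beta$, after removing a point of $S^2\setminus D$. So you could cite it directly.

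The one step you leave implicit is that the region with frontier $\partial D$ is $S^2\setminus D$. This set is a component of $S^2\setminus\partial D$ disjoint from $\alpha$, hence a component of $S^2\setminus(\alpha\cup\partial D)$. That is what shows the other two regions exhaust $\operatorname{int}D\setminus\alpha$.
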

\begin{proof}
Let $P$ be the first arc. Join its ends by a simple arc $R$ whose
interior lies in the complementary open disk. Then $P\cup R$ is a
Jordan curve. At $a_1$ and $a_2$ it passes from one side of
$\partial D$ to the other. Thus the two open boundary arcs of
$\partial D\setminus\{a_1,a_2\}$ lie on opposite sides of $P\cup R$.
In particular, $b_1$ and $b_2$ lie on opposite sides. By the Jordan
curve theorem, an arc joining them must meet $P\cup R$. An arc in
$D$ disjoint from $P$ cannot meet $R$, whose interior is outside $D$
and whose ends belong to $P$. This is a contradiction.
\end{proof}

\begin{lemma}\label{lem:del}
Let $T$ be a connected loopless triangular plane multigraph, and let
$S\subseteq V(T)$ induce a forest, with parallel edges counted as
distinct edges. Then $T-S$ is nonempty and connected.
\end{lemma}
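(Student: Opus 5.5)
The plan is to show that a disconnected (or empty) $T-S$ forces a cycle inside $T[S]$, which the hypothesis forbids. The cycle will come from the mod-$2$ boundary of a union of faces. Throughout, parallel edges are kept as distinct edges, as in the statement.

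\emph{Nonemptiness.} Since $T$ is triangular, it has at least one face, and that face is bounded by a $3$-cycle. So $T$ itself contains a cycle and is not a forest. Hence $S \neq V(T)$, and $T-S$ is nonempty.

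\emph{Setting up the face region.} Suppose $T-S$ is disconnected. Let $C$ be one component of $T-S$, and let $C'$ be a different component. Let $\mathcal R$ be the set of faces of $T$ having at least one vertex in $C$. Let $B$ be the set of edges of $T$ lying on exactly one face of $\mathcal R$. Each face is bounded by a $3$-cycle, so each edge lies on exactly two distinct faces. In the cycle space of $T$ over $\mathbb F_2$, $B$ is therefore the sum of the boundary $3$-cycles of the faces in $\mathcal R$. Consequently every vertex has even degree in $B$, with parallel edges counted separately.

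\emph{$B$ lies in $T[S]$.} Take an edge $e\in B$ and let $F\in\mathcal R$ be the face on one side of $e$. If an end of $e$ lay in $C$, then both faces along $e$ would contain a vertex of $C$, and $e\notin B$. So $F$ has a vertex $c\in C$ that is not on $e$, and $F$ is a triangle $c\,x\,y$ with $e=xy$. Thus $x$ and $y$ are adjacent to $c$ but lie outside $C$. Since $C$ is a component of $T-S$, both $x$ and $y$ lie in $S$. Hence $B\subseteq E(T[S])$.

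\emph{$B$ is nonempty.} If $B=\emptyset$, then $\mathcal R$ is closed under crossing edges. Because $T$ is connected, its dual is connected, so $\mathcal R$ would consist of all faces. A vertex $v\in C'$ lies on some face, and this face is then a triangle containing a vertex of $C$. Hence $v$ would be adjacent to, or equal to, a vertex of $C$, which is impossible because $v$ lies in a different component of $T-S$.

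\emph{Conclusion.} $B$ is a nonempty edge set in which every vertex has even degree. Such a set contains a cycle, possibly a $2$-cycle formed by parallel edges. That cycle lies in $T[S]$, contradicting that $S$ induces a forest. So $T-S$ is connected.

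The step needing most care is the claim that each edge borders two distinct faces, which is what makes $B$ a genuine sum of face boundaries. It follows from every face, including the outer one, being bounded by a $3$-cycle in a loopless multigraph: an edge lying twice on the boundary of one face could not occur on a $3$-cycle boundary. Lemma~\ref{lem:alternating} is not needed in this argument.
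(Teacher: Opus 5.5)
Your proof is correct, and it takes a genuinely different route from the paper's.

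\textbf{The paper's route.} The paper argues locally and inductively. It roots each tree of $T[S]$ and deletes the vertices of $S$ parents before children. When $v$ is deleted, at most one entry of its rotation belongs to an already deleted vertex, namely its parent, joined by a single edge. So the facial triangles at consecutive edge-ends of $v$ link all surviving neighbors of $v$ by a path or a cycle, and connectivity is preserved at every step.

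\textbf{Your route.} You argue globally by contradiction. The $\mathbb{F}_2$-sum $B$ of the facial $3$-cycles of the faces meeting one component $C$ is an even edge set. It lies in $T[S]$, because a face of $\mathcal R$ across an edge of $B$ is a triangle $cxy$ with $x,y$ adjacent to $c$ but outside $C$. It is nonempty by connectivity of the dual together with the existence of a second component. You use $\mathcal R\neq\emptyset$ without saying so; it holds because a vertex of $C$ lies on some face.

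\textbf{Trade-offs.} The paper's version needs only the rotation at one vertex at a time and the forest structure of $T[S]$ at that vertex. It makes no appeal to unions of faces, the cycle space, or the dual, which keeps it close to a rotation-system formalization. Your version needs no bookkeeping about a deletion order, and it gives a stronger structural conclusion. Around each component of $T-S$ it exhibits a nonempty even subgraph of $T[S]$, which is the classical fact that a separating vertex set in a triangulation contains a cycle. The price is two global inputs: that each edge borders two distinct faces, which you justify correctly from the $3$-cycle face boundaries, and that the dual is connected.
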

\begin{proof}
The complement is nonempty: otherwise $T$ would be a forest, whereas
each of its facial $3$-cycles is a cycle in $T$.
Root each tree of $T[S]$ and delete its vertices with parents before
children. When deleting $v$, the only possible previously deleted
neighbor in $T[S]$ is its parent. There is at most one edge to that
parent, since two parallel edges would form a cycle in $T[S]$.

Consider the cyclic list of incident edge ends at $v$ in the
\emph{original} triangulation $T$, recording the corresponding
neighbors with multiplicity. Consecutive entries are joined by the
third edge of an original facial triangle. At most one entry
corresponds to a deleted vertex. Removing that entry, if present,
leaves a linear sequence, or a cyclic list if none was removed,
joining all surviving neighbors of $v$. Every joining edge remains
present because both its ends survive, and none is incident with $v$.
Hence all surviving neighbors are connected without $v$.
Inductively the graph before deleting $v$ is connected, so each
component after the deletion contains a surviving neighbor of $v$.
The remaining graph is nonempty because it contains $T-S$.
It is therefore connected, completing the induction.
\end{proof}

\begin{proposition}\label{prop:rank}
Let $H$ be a finite simple planar graph with a proper vertex coloring
with classes $V_1,\dots,V_4$. Then
\begin{equation}\label{eq:comp}
\sum_{i=1}^4 \kappa(H - V_i) \le 3\lvert V(H)\rvert - \lvert E(H)\rvert.
\end{equation}
\end{proposition}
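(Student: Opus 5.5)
We plan to complete $H$ to a triangular plane multigraph, verify the inequality there with equality by Lemma~\ref{lem:del}, and then delete the added edges one at a time, checking that each deletion keeps the inequality true. The introduction describes this strategy.

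\textbf{Step 1: the triangulated case.} Let $T$ be a connected loopless triangular plane multigraph on $n \ge 3$ vertices with a proper $4$-coloring with classes $V_1,\dots,V_4$. Parallel edges are allowed, but each pair of parallel edges must bound a region containing further vertices, so that no face is a digon. For each $i$, the set $S=V_i$ is independent, so $T[S]$ is edgeless and in particular a forest. Lemma~\ref{lem:del} then gives $\kappa(T-V_i)=1$ whenever $V_i \neq V(T)$, and $\kappa(T - V_i) \le 1$ holds in every case. Hence the left side of \eqref{eq:comp} is at most $4$. The edge count is $m = 3n-6$, so the right side equals $6$. The inequality therefore holds in the triangulated case, with slack.

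\textbf{Step 2: reduction to triangulations.} We may assume $H$ is connected. For a disjoint union the right side is additive, and the left side is additive as well, since each $\kappa(H-V_i)$ counts components separately. Small cases $n\le 2$ are checked directly; for $n=1$ the inequality reads $3 \le 3$. We then add edges inside faces while keeping the colouring proper. Each added edge must join differently coloured vertices, and we allow parallel edges so that we never get stuck. The aim is a triangular loopless multigraph $T \supseteq H$.

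A face of length at least $4$ whose boundary walk is not alternating between two colors contains two non-adjacent boundary positions with distinct colors, and we add a chord between them. A face whose boundary alternates between two colours, say $1,2,1,2,\dots$, cannot be split by a chord. Here we insert a new vertex of color $3$ inside the face and join it to all boundary occurrences. Inserting a vertex changes both sides of \eqref{eq:comp}, so we track the vertex insertions separately or avoid them. Lemma~\ref{lem:alternating} will be used to show that two crossing chord demands cannot both be forced, so that some valid chord always exists in a non-alternating face.

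\textbf{Step 3: deleting added edges.} Deleting an edge $e=uv$ with $\phi(u)=a$ and $\phi(v)=b$ raises the right side by exactly $1$. On the left, $e$ lies in $H-V_i$ only for the two colours $i\notin\{a,b\}$, and removing it raises each of those two $\kappa$ values by at most $1$. This naive count could raise the left side by $2$, which is insufficient on its own. The fix is to use the slack of $2$ from Step 1 together with an amortized count, or to sharpen the argument. Let $c,d$ be the two colours other than $a,b$. The edge $e$ is a bridge in at most one of $H-V_c$ and $H-V_d$, because $e$ lies in a triangular face whose third vertex $w$ survives in one of the two deletions. One needs this to persist during the deletion order, for instance by deleting added edges in reverse order of insertion, so that at each stage the current graph is "triangulated relative to" the remaining added edges.

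\textbf{Main obstacle.} The hardest part is Step 3 combined with the vertex insertions from Step 2: guaranteeing that each deleted edge increases $\sum_i\kappa$ by at most $1$ amortized. I would first try a direct induction on $|E(H)|$ that avoids the explicit triangulation. Add one chord $e$ in a non-triangular face of $H$ and apply induction to $H+e$. Then use the face containing $e$ and Lemma~\ref{lem:alternating} to show that $e$ is a bridge of $H+e-V_i$ for at most one $i$, possibly after choosing the chord carefully. Faces whose boundary alternates in two colours are the exceptional case; for these I would count components directly, since such a face separates the graph in a controlled way.
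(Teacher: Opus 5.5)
There is a genuine gap, and it sits where you suspected: the faces that no properly coloured chord can split. Your description of these faces is off. On a facial cycle $v_1\cdots v_k$ with $k\ge 5$, one of $v_1v_3$, $v_1v_4$ is bichromatic; in particular an alternating $2k$-gon with $k\ge 3$ is split by $v_1v_4$. A quadrilateral with no bichromatic diagonal is alternating. So once you reduce to $2$-connected $H$ (so that facial walks are simple cycles), the only stuck faces are alternating quadrilaterals $a_1b_1a_2b_2$, and finding chords elsewhere needs no topology.

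For these quadrilaterals, inserting a vertex $w$ of a third colour is not controlled by your accounting. Put $\alpha=\phi(a_j)$ and $\beta=\phi(b_j)$. Let $c_\alpha$ be the number of components of $H-V_\alpha$ meeting $\{b_1,b_2\}$, and $c_\beta$ the number of components of $H-V_\beta$ meeting $\{a_1,a_2\}$. Removing $w$ raises the right side of \eqref{eq:comp} by $4-3=1$, but raises the left side by $(c_\alpha-1)+(c_\beta-1)$. Both $c_\alpha$ and $c_\beta$ already equal $2$ for $H=C_4$. Nothing in your plan compensates these losses, and the slack of $2$ from Step~1 does not grow with the number of such faces.

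The missing idea is to give up properness on the completed graph. Put a \emph{monochromatic} diagonal into each stuck quadrilateral, chosen so that all monochromatic edges form a forest $F$. This choice is where Lemma~\ref{lem:alternating} is really used. If both diagonals of $a_1b_1a_2b_2$ closed cycles, $F$ would contain an $a_1$--$a_2$ path and a $b_1$--$b_2$ path. These paths are vertex-disjoint because components of $F$ are monochromatic, and they lie in the complementary disk, meeting its boundary only at their ends, which is impossible. Then each $T[V_i]$ is a forest rather than edgeless, which is exactly the generality Lemma~\ref{lem:del} provides. Hence $\kappa(T-V_i)=1$ with no new vertices, and $\lvert E(T)\rvert=3n-6$ still holds.

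Your Step~3 has the right local fact, and it extends to monochromatic diagonals. If $xy$ borders a triangle $xyz$ whose other two sides are present, only the colour $\phi(z)$ can gain a component when $xy$ is deleted. But your suggested order does not guarantee such a triangle. In reverse insertion order, the chord being deleted borders the two faces it created, and these need not be triangles. For example, split a hexagon coloured $a,c,d,b,c,d$ first by $v_1v_4$. Both quadrilaterals then have their $v_1$--$v_4$ boundary path broken by deleting either $V_c$ or $V_d$, so the local argument bounds neither colour. The paper instead roots the internal dual tree of each face's triangulation and deletes diagonals in order of increasing depth, so the child-side triangle is always intact.

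Finally, reducing only to connected $H$ does not give simple facial cycles. You need the block reduction: gluing two graphs at a vertex lowers both sides of \eqref{eq:comp} by exactly $3$.
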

\begin{proof}
Let $n = \lvert V(H)\rvert$ and $m = \lvert E(H)\rvert$.
The small cases are direct: the empty graph gives $0 \le 0$; an
isolated vertex gives $3 \le 3$; a single edge gives $4 \le 5$.
Hence assume $H$ is $2$-connected with $n \ge 3$, the general case
being recovered by block induction at the end. Fix an embedding in
the sphere; every face boundary is then a simple cycle. The auxiliary
construction below permits parallel edges, always treated as distinct
edges.

Inside each face, repeatedly insert a diagonal joining differently
colored nonconsecutive boundary vertices while possible. Each
insertion splits one polygonal face into two and preserves properness.
An inserted diagonal may be parallel to an edge elsewhere in the
embedding. The process terminates because each insertion decreases
the sum of the face lengths minus three, summed over all faces, by one.
When the process stops, every face is a triangle or an alternating
bichromatic quadrilateral: on a facial cycle $v_1\cdots v_k$ with
$k \ge 5$, both $v_1v_3$ and $v_1v_4$ are diagonals, and they cannot
both be monochromatic lest $v_3$ and $v_4$ share a color; a
quadrilateral with no bichromatic diagonal has equal opposite pairs.

Choose one diagonal in each remaining quadrilateral so that the chosen
diagonals form a forest $F$. Process the quadrilaterals in any order.
The two diagonals of $a_1b_1a_2b_2$ are monochromatic, in distinct
colors. If both choices closed a cycle, $F$ would already contain an
$a_1$--$a_2$ simple path and a $b_1$--$b_2$ simple path. These paths
are vertex disjoint, since components of $F$ are monochromatic.
All previously chosen diagonals were drawn in other faces, so both
paths lie in the closed disk complementary to the open quadrilateral
face. The interior vertices of either path cannot be the other two
corners, which have a different color, or its own ends, since the path
is simple. No edge of $F$ is a boundary edge of this quadrilateral:
its boundary edges are bichromatic. Since edges in the embedding meet
only at common ends, each path meets the boundary only at its ends.
Lemma~\ref{lem:alternating} now gives a contradiction.
So some diagonal keeps $F$ a forest.

Let $T$ be the resulting triangular plane multigraph. Every
monochromatic edge of $T$ lies in $F$, so each $T[V_i]$ is a forest.
Lemma~\ref{lem:del} gives $\kappa(T - V_i) = 1$ for $i = 1,\dots,4$.

Delete the added diagonals to recover $H$. In each original face, the
triangles of its triangulation have a tree as internal dual; root it
and delete the diagonals in order of increasing depth. When $xy$ is
deleted, its child-side triangle $xyz$ still has both other sides
present, as boundary edges or deeper diagonals. For $i$ distinct from
the colors of $x$ and $y$, the path $xzy$ is present unless $i$ is the
color of $z$, so deleting $xy$ does not add a component; for a color
of an endpoint, $xy$ is already absent. Only the graph obtained by
deleting the color of $z$ can gain a component, and deleting one edge
increases the number of components by at most one. Each deletion
therefore raises $\sum_i \kappa$ by at most one.

With $d$ added diagonals,
$\sum_i \kappa(H - V_i) \le 4 + d$. Euler\textquoteright s formula with
$3f = 2\lvert E(T)\rvert$ gives $\lvert E(T)\rvert = 3n - 6$, also in
the multigraph case, so $d = 3n - 6 - m$ and the sum is at most
$3n - m - 2$, whence~\eqref{eq:comp} for the block.

Joining two graphs at one vertex lowers the right-hand side of
\eqref{eq:comp} by $3$ against the sum of the two sides, while the
left-hand side drops by exactly $3$: deleting the shared
vertex\textquoteright s color leaves the graphs disjoint, and for each
other color the retained copies of that vertex identify one component
from each side. Disjoint unions add on both sides. Starting from the
small cases, block decomposition and disjoint unions extend
\eqref{eq:comp} to every $H$.
\end{proof}
\section{Matroid partition and the coloring}
\begin{proof}[Proof of Theorem~\ref{thm:main}]
Let $G$ be finite simple planar and fix any proper four-coloring
$\phi$, with $V_i = \phi^{-1}(i)$. For each $i$ let $M_i$
be the graphic matroid on the edges avoiding $V_i$, all other edges
declared loops; its independent sets are exactly the forests meeting
no edge incident with $V_i$. For $A \subseteq E(G)$ put
$H = (V(G),A)$, so $\phi$ stays proper on $H$ and
$r_i(A) = \lvert V(G) \setminus V_i\rvert - \kappa(H - V_i)$.
Summing and applying Proposition~\ref{prop:rank} to $H$,
\begin{equation}\label{eq:part}
\sum_{i=1}^4 r_i(A)
= 3\lvert V(G)\rvert - \sum_{i=1}^4 \kappa(H - V_i)
\ge \lvert A\rvert \qquad \text{for all $A \subseteq E(G)$.}
\end{equation}
By the partition criterion recalled in Section~2,
\eqref{eq:part} yields
$E(G) = F_1 \mathbin{\dot\cup} \cdots \mathbin{\dot\cup} F_4$ with
$F_i$ independent in $M_i$. Keep $\phi$ on vertices and put color $i$
on $F_i$. Each edge class is a forest avoiding vertices of its color,
and $\phi$ remains proper. This is
Definition~\ref{def:gtc} for $G$.
\end{proof}
\section{Formal verification}\label{sec:formal}
The theorem \path{BorowieckiBroere.generalized_total_coloring}
is formalized in Lean~4 relative to exactly six custom axioms:
\begin{enumerate}
\item \path{four_color_theorem};
\item \path{matroid_partition};
\item \path{sphereMap_of_twoConnected_planeDrawing};
\item \path{jordan_alternating_paths};
\item \path{sphere_triangulation_induced_forest_complement};
\item \path{sphere_triangulation_triangle_deletion_order}.
\end{enumerate}
The first two are the Four Color Theorem and the graphic specialization of
matroid partition. The remaining four isolate the passage from a plane
drawing to a sphere map, the Jordan separation used in
Lemma~\ref{lem:alternating}, the forest-complement connectivity of
Lemma~\ref{lem:del}, and the dual deletion order used in the proof of
Proposition~\ref{prop:rank}.

Conditional on those results, Lean proves every other step: colored face
completion, the monochromatic diagonal forest, triangular component counts,
deletion bookkeeping, reduction across components and cut vertices, the
planar component inequality and rank condition, and the final edge-forest
partition with endpoint-color avoidance. Thus, relative to the six axioms
above, the formalization is sufficient to derive the theorem stated here.
The extension statement of Theorem~\ref{thm:main} is formalized as
\path{BorowieckiBroere.extends_of_proper_four_coloring}; it uses the
same background axioms except for \path{four_color_theorem}, which
is needed only to obtain the existence consequence.

The guarded \lean{\#print axioms} checks in \lean{AxiomAudit.lean} confirm
that the public endpoint uses exactly these six custom axioms. No
\lean{sorry}, \lean{admit}, or \lean{native\_decide} occurs. The pinned
Lean~4.30.0 and mathlib~v4.30.0 \lean{lake build} reproduces the check.
The artifact is available at
\url{https://github.com/jamesschreib/borowiecki-broere-conjecture},
at commit
\href{https://github.com/jamesschreib/borowiecki-broere-conjecture/tree/31975d021eb780b578834143ca890315de38e977}{\texttt{31975d021eb7}}.
\section{Tool and computational resource disclosure}
ChatGPT~6 Astra (OpenAI) was accessed in chat to generate the proof.
ChatGPT~5.6 Sol was accessed through Codex to write portions of the
Lean~4 development described in Section~\ref{sec:formal}.
For recommendations on disclosing such tool use, see the Leiden
Declaration on Artificial Intelligence and Mathematics~\cite{Lei26}.
All statements and proofs were subsequently
verified independently by the author. The formal development is
checked by the Lean kernel, and the Lean statements have been
confirmed by hand to faithfully restate the corresponding statements
of this paper. The author alone is responsible for the correctness of
the arguments and for the accuracy and completeness of the citations.

\end{document}